\title{A Method to Construct $1$-Rotational Factorizations of Complete Graphs and Solutions to the Oberwolfach Problem}
\author{Daniel McGinnis and Eirini Poimenidou}
\newtheorem{theorem}{Theorem}[section]
\newtheorem{definition}[theorem]{Definition}
\newtheorem{lemma}[theorem]{Lemma}
\DeclareMathOperator{\modd}{mod}
\begin{document}
\date{New College of Florida}
\maketitle
\begin{center}
    Abstract
\end{center}
The concept of a $1$-rotational factorization of a complete graph under a finite group $G$ was studied in detail by Buratti and Rinaldi. They found that if $G$ admits a $1$-rotational $2$-factorization, then the involutions of $G$ are pairwise conjugate. We extend their result by showing that if a finite group $G$ admits a $1$-rotational $k=2^nm$-factorization where $n\geq 1$, and $m$ is odd, then $G$ has at most $m(2^n-1)$ conjugacy classes containing involutions. Also, we show that if $G$ has exactly $m(2^n-1)$ conjugacy classes containing involutions, then the product of a central involution with an involution in one conjugacy class yields an involution in a different conjugacy class. We then demonstrate a method of constructing a $1$-rotational $2n$-factorization under $G \times \mathbb{Z}_n$ given a $1$-rotational $2$-factorization under a finite group $G$. This construction, given a $1$-rotational solution to the Oberwolfach problem $OP(a_{\infty},a_1, a_2 \cdots, a_n)$, allows us to find a solution to $OP(2a_{\infty}-1,^2a_1, ^2a_2\cdots, ^2a_n)$ when the $a_i$'s are even ($i \neq \infty$), and $OP(p(a_{\infty}-1)+1, ^pa_1, ^pa_2 \cdots, ^pa_n)$ when $p$ is an odd prime, with no restrictions on the $a_i$'s.

\section{Introduction}

Questions concerning factorizations and decompositions of complete graphs have heavily influenced research in graph theory; one famous problem related to factorizations is the Oberwolfach problem. The Oberwolfach problem, first posed by Gerhard Ringel in $1967$ at a conference in Oberwolfach, Germany, in its original form is as follows. 
\begin{center}
    Given $2n+1$ people and $s$ round tables $T_1,T_2,...T_s$ where table $T_i$ sits $t_i$ people and $t_1+t_2+...+t_s=2n+1$, can you find seating arrangements for $n$ nights so that each person sits next to two different people each night?
\end{center}
The Oberwolfach problem with these parameters is denoted by\\ $OP(t_1,\ t_2, \dots,\ t_s)$. 
Despite the simple statement of the problem, a complete solution is still unknown. However, some cases of the Oberwolfach problem are known, for example, the famous Walecki construction (\cite{alspachwonderful2008}) solves the case when $s=1$ and $T_1=2n+1$, in other words, when there is one round table that sits everybody; a complete solution is also known when the number of tables $s$ is fixed at $2$, \cite{traettacomplete2013}. Some more cases of the Oberwolfach problem are solved in \cite{hiltonsome2001}, \cite{ollissome2005}, \cite{bryantcomplete2009}, \cite{alspachobservations1985}, \cite{alspachoberwolfach1989}, \cite{rinaldigraphproducts2011}, and \cite{alspachon2016}. So far there is only a very small finite number of cases where it is known that the Oberwolfach problem has no solution. In Section $4$ of this paper, we present a method of constructing solutions to the Oberwolfach problem for infinitely many cases when a solution to a single case of the Oberwolfach problem that is said to be $1$-rotational is given. 

As usual, when $V$ is a finite set, we denote the complete graph with vertex-set $V$ by $K_V$. A \textit{$k$-factor} of $K_V$ is a spanning $k$-regular subgraph of $K_V$, and a \textit{$k$-factorization} is a set of $k$-factors whose edges partition $E(K_V)$, the edge set of $K_V$. 

For a finite group $G$, define $\overline{G} = G \cup \{\infty\}$. If $F$ is a $k$-factor of $K_{\overline{G}}$, then for an element $g \in G$, $Fg$ will denote the $k$-factor obtained by multiplying each vertex on the right by $g$, where $\infty g = \infty$, in other words, permuting the vertices of $F$ by the function $a\mapsto ag$. A $k$-factorization $\mathcal{F}$ of $K_{\overline{G}}$ is said to be \textit{$1$-rotational} if for every $k$-factor $F \in \mathcal{F}$ and $g \in G$, $Fg \in \mathcal{F}$. From this point onward, when we speak about a $1$-rotational $k$-factorization of $K_{\overline{G}}$, where $G$ is a finite group, it will be understood that $G$ is acting on the right by right multiplication on the vertex set $\overline{G}$. 

A $2$-factorization of a complete graph into isomorphic $2$-factors is a solution to an associated Oberwolfach problem (this is well known can be seen by associating each cycle of the $2$-factor with a table and the vertices with people). If each $2$-factor is isomorphic to the union of cycles $C_{a_1} \cup C_{a_2} \cup \cdots \cup C_{a_n}$, then we say that the $2$-factorization is a solution to $OP(a_1,a_2, \cdots, a_n)$, and if the $2$-factorization is $1$-rotational under some finite group $G$, then we say that it is a $1$-rotational solution to $OP(a_1,a_2, \cdots, a_n)$. Finally if each $2$-factor of the $2$-factorization contains $k$ copies of say $C_{a_1}$, then we will say that $2$-factorization is a solution to $OP(^ka_1,a_2, \cdots, a_n)$. Of course, this will apply to any of the $a_i$'s.

In \cite{buratti1-rotational2007}, the authors show that if there exists a $1$-rotational $2$-factorization of $K_{\overline{G}}$, then the involutions of $G$ lie in a single conjugacy class. We extend this result by showing that if there exists a $2^nm$-factorization of $K_{\overline{G}}$ where $n\geq 1$ and $m$ is odd, then $G$ has at most $m(2^n-1)$ conjugacy classes containing involutions. We also find that if there exists a $2^n$-factorization of $K_{\overline{G}}$ and $G$ contains exactly $m(2^n-1)$ conjugacy classes of involutions, then multiplication of an involution in one conjugacy class with a central involution will yield an involution lying in different conjugacy class. This then allows us to prove that there exists a $1$-rotational $4$-factorization of $K_{\overline{D_{2N}}}$ if and only if $N \equiv 2$ $\modd(4)$. Finally, given a $1$-rotational $2$-factorization of $K_{\overline{G}}$, in other words given a $1$-rotational solution to an associated Oberwolfach problem, say $OP(a_{\infty},a_1, a_2 \cdots, a_n)$ where $a_{\infty}$ denotes the length of the cycle through $\infty$, we construct a $1$-rotational $2n$-factorization of $K_{\overline{G \times \mathbb{Z}_n}}$. In the case that $n=2$, this construction provides a solution to $OP(2a_{\infty}-1,^2a_1, ^2a_2\cdots, ^2a_n)$ whenever the $a_i$'s ($i \neq \infty$) are even. In the case that $n=p$ is an odd prime, the construction leads to a solution of $OP(p(a_{\infty}-1)+1, ^pa_1, ^pa_2 \cdots, ^pa_n)$ regardless of the $a_i$'s.

\section{$k$-Starters and $1$-Rotational $k$-Factorizations}
Throughout this paper two vertices in square brackets will denote an edge, for example $[a,b]$ will be the edge with endpoints $a$ and $b$, and two elements in parenthesis (for example $(a,b)$) will denote an ordered pair. If $[a,b]$ is an edge of $K_{\overline{G}}$, then $[a,b]g$ will denote the edge $[ag,bg]$, where $ag$ for instance denotes multiplication of $a$ and $g$ under the operation of $G$. Also if $u$ and $v$ are vertices of some graph we will use the notation $u \sim v$ to mean that ``$u$ is adjacent to $v$''.

We will use the notation $C_n$ to denote a cycle of length $n$. If $x_1, x_2, \cdots ,x_n$ are understood to be vertices of a graph, then $(x_1,x_2, \cdots , x_n)$ and $[x_1,x_2, \cdots , x_n]$ will denote, respectively, a cycle and a path with vertices the $x_i$'s. 

Finally, notice that if there exists a $1$-rotational $k$-factorization of $K_{\overline{G}}$, then the order of the group $G$ is equal to the degree of any vertex of $K_{\overline{G}}$ ($|G|=|\overline{G}|-1$), so it follows that if $\mathcal{F}$ is a $1$-rotational $k$-factorization of $K_{\overline{G}}$, then $|G|=k|\mathcal{F}|$. Therefore, $k$ must divide the order of $G$. 

Now, we give some definitions and an important theorem which were all presented in \cite{buratti1-rotational2007}.
\begin{definition}
 Let $G$ be a finite group and let $\Gamma$ be a simple graph whose vertices are the elements of $\overline{G}$. The {\em list of differences}, $\Delta \Gamma$, is the multiset 
\[
\Delta \Gamma = \{ab^{-1}, ba^{-1} \ : \ [a,b] \in E(\Gamma) \quad a \neq \infty \neq b\}.
\]
If $x \in \Delta \Gamma$, then we say $\Delta \Gamma$ covers $x$.
\end{definition}
\begin{definition}\label{strtrdef}
{\em [\cite{buratti1-rotational2007}, Definition 2.2]} Let $G$ be a finite group, whose order is divisible by $k$ (or a group of odd order if $k=1$), and let $F$ be a $k$-factor of $K_{\overline{G}}$. We say $F$ is a {\em $k$-starter} under $G$ if the following are satisfied: \\ \\ 
1. The $G$-stabilizer of $F$ has order $k$. \\ 
2. $\Delta \Gamma$ covers every element of $G-\{1_G\}$.
\end{definition}
\begin{theorem}\label{thm1}
{\em [\cite{buratti1-rotational2007}, Theorem 2.3]} 
A $1$-rotational $k$-factorization of $K_{\overline{G}}$ is equivalent to the existence of a $k$-starter in $G$.
\end{theorem}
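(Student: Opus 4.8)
The plan is to prove the equivalence in both directions: from a $k$-starter $F$ I will build a $1$-rotational $k$-factorization as the orbit of $F$ under $G$, and conversely from a $1$-rotational $k$-factorization I will extract a single factor that turns out to be a $k$-starter. Throughout I will exploit the fact, recorded in the excerpt, that any $1$-rotational $k$-factorization $\mathcal{F}$ satisfies $|G|=k|\mathcal{F}|$.

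For the forward direction, given a $k$-starter $F$ I set $\mathcal{F}=\{Fg : g \in G\}$, which is $1$-rotational by construction. Writing $S=\{g \in G : Fg = F\}$ for the stabilizer, the distinct factors are indexed by the cosets $Sg$, so $|\mathcal{F}|=|G|/|S|=|G|/k$ by Condition $1$ and the orbit-stabilizer theorem. I then verify that the edges of these factors partition $E(K_{\overline{G}})$. A direct count shows the total number of edges over the $|G|/k$ distinct $k$-factors equals $|E(K_{\overline{G}})|$, so it suffices to show every edge is covered at least once. For edges through $\infty$: since $S$ fixes $\infty$ and acts freely by right multiplication on $G$, the $k$ neighbors of $\infty$ in $F$ form an $S$-invariant set of size $k=|S|$, hence a single orbit $a_1S$; the translates $a_1Sg$ are pairwise disjoint as $Sg$ ranges over the cosets, so they partition $G$ and each edge $[\infty,b]$ appears exactly once. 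For edges $[x,y]$ with $x,y\in G$: the crucial observation is that the difference $xy^{-1}$ of an edge is invariant under right multiplication, so Condition $2$ supplies an edge $[a,b]\in F$ with $ab^{-1}=xy^{-1}$, and then $g=b^{-1}y$ satisfies $[a,b]g=[x,y]$. Combined with the count, every edge is covered exactly once.

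For the reverse direction, given a $1$-rotational $k$-factorization $\mathcal{F}$, for each $a\in G$ let $F_a$ denote the unique factor containing the edge $[\infty,a]$. The key step is to show $G$ acts transitively on $\mathcal{F}$: since $F_1$ contains $[\infty,1_G]$, the translate $F_1 g$ contains $[\infty,g]$ and lies in $\mathcal{F}$, so by uniqueness $F_1 g=F_g$; as every factor contains some edge $[\infty,a]$, every factor equals $F_1 a$ for some $a$, i.e. $\mathcal{F}$ is the orbit of $F:=F_1$. Orbit-stabilizer then forces $|S|=|G|/|\mathcal{F}|=k$, giving Condition $1$. For Condition $2$, given $d\in G-\{1_G\}$, the non-$\infty$ edge $[d,1_G]$ lies in a unique $Fa$, so $[da^{-1},a^{-1}]\in F$ has difference $(da^{-1})(a^{-1})^{-1}=d$; hence $\Delta\Gamma$ covers $d$, and $F$ is a $k$-starter.

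I expect the main obstacle to be the bookkeeping that upgrades ``every edge is covered at least once'' to ``exactly once,'' and in particular the two structural facts that tie the stabilizer order $k$ to the partition property: that the neighbors of $\infty$ form a single coset of the stabilizer (forward direction) and the transitivity identity $F_1 g=F_g$ (reverse direction). The translation-invariance of differences is the engine that makes Condition $2$ do its work, but once that observation is isolated the non-$\infty$ edges are handled mechanically.
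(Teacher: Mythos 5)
Your proof is correct: both directions are sound, the coset argument for the edges through $\infty$ and the difference argument for the remaining edges do give "covered at least once," and the edge count legitimately upgrades this to "exactly once," while the reverse direction correctly extracts transitivity from the uniqueness of the factor containing $[\infty,g]$. Note that the paper does not actually prove this statement — it cites it from Buratti and Rinaldi — but the outline it records immediately after the theorem (the $G$-orbit of a $k$-starter is a $1$-rotational $k$-factorization, and any factor of such a factorization is a $k$-starter) is exactly the structure of your argument, so you have essentially reconstructed the standard proof.
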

More precisely the authors of \cite{buratti1-rotational2007} show that if $\mathcal{F}$ is a $1$-rotational \\ $k$-factorization of $K_{\overline{G}}$, then any $k$-factor $F \in \mathcal{F}$ is a $k$-starter in $G$ and the $G$-orbit of a $k$-starter under $G$ is a $1$-rotational $k$-factorization of $K_{\overline{G}}$, which in particular means that the $k$-factors in a $1$-rotational $k$-factorization will be pairwise isomorphic.

Throughout the rest of the paper will say that a finite group $G$ is $R_k$ if there exists a $1$-rotational $k$-factorization of $K_{\overline{G}}$.

If a group $G$ is $R_k$ and $k$ is even, then the order of $G$ is even as well. In what follows we suppose $G$ to be a group of even order which is $R_k$. We will prove that $k$ is necessarily even. We will also present some necessary conditions on the conjugacy classes containing involutions.

\begin{lemma}\label{lem1}
Let $\mathcal{F}$ be a $1$-rotational $k$-factorization of $K_{\overline{G}}$, where the order of $G$ is even. Then $k$ is even and every involution of $G$ is contained in the $G$-stabilizer of some factor $F \in \mathcal{F}$.
\end{lemma}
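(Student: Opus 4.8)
The plan is to fix an arbitrary involution $t \in G$ and to study the vertex permutation $\tau\colon x \mapsto xt$ of $\overline{G}$. Since $|G|$ is even, Cauchy's theorem guarantees that $G$ has at least one involution, so the set of involutions is nonempty and the assertion ``every involution lies in the stabilizer of some factor'' is a genuine (not vacuous) claim whose proof will also deliver the parity of $k$. The map $\tau$ is a graph automorphism of $K_{\overline{G}}$ that fixes $\infty$ and, because $t \neq 1_G$, acts on $G$ as a fixed-point-free involution, pairing each $a$ with $at$. By the definition of $Fg$ it sends a factor $F$ to $Ft$, and since $\mathcal{F}$ is $1$-rotational the family $\mathcal{F}$ is invariant under $\tau$ (take $g=t$ in the defining property).

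The heart of the argument is to locate the edges of $K_{\overline{G}}$ that $\tau$ fixes setwise. I would check that an edge $[a,b]$ with $a,b \in G$ satisfies $[at,bt]=[a,b]$ precisely when $b=at$, so the fixed edges are exactly those of the matching $M=\{[a,at] : a \in G\}$, while no edge incident to $\infty$ is fixed, since $\infty\tau=\infty$ but $a\tau=at\neq a$. In particular $M$ is nonempty, having $|G|/2 \geq 1$ edges. Next I would argue that any such fixed edge must sit inside a $\tau$-stabilized factor: because $\mathcal{F}$ partitions $E(K_{\overline{G}})$, each $e \in M$ lies in a unique factor $F$; applying $\tau$ gives $e=\tau(e)\in Ft$, and uniqueness forces $Ft=F$, that is $t$ lies in the $G$-stabilizer of $F$. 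As $M\neq\emptyset$, such a factor exists, proving the second assertion of the lemma for the chosen $t$ and hence for every involution of $G$.

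The evenness of $k$ then follows immediately. By Theorem \ref{thm1} every $F\in\mathcal{F}$ is a $k$-starter, so its $G$-stabilizer has order $k$ by Definition \ref{strtrdef}; since the stabilizer of the factor produced above contains the order-$2$ subgroup $\langle t\rangle$, Lagrange's theorem gives $2\mid k$. The single delicate point, and what I regard as the main obstacle, is finding the right invariant: one must recognize that the edges fixed by $\tau$ form the nonempty matching $M$ and that invariance of the factorization forces these edges into factors that $\tau$ itself stabilizes. Once this observation is in hand the remainder is routine bookkeeping, and no analysis of the cycle structure of the factors or of the neighbors of $\infty$ is needed.
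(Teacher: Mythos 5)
Your proposal is correct and takes essentially the same approach as the paper: the paper simply works with the single fixed edge $[1_G,t]$ (one member of your matching $M$), observes that the factor $F$ containing it satisfies $Ft=F$ since $Ft$ also contains that edge, and concludes $2\mid k$ from the order-$k$ stabilizer exactly as you do. Your identification of the full matching of $\tau$-fixed edges is harmless extra generality but adds nothing the single edge does not already give.
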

\begin{proof}
Let $x$ be an involution of $G$ and let $F \in \mathcal{F}$ be the factor that contains the edge $[1_G, x]$. Then $Fx$ also contains the edge $[1_G, x]x=[1_G,x]$. Since $\mathcal{F}$ is a decomposition of $K_{\overline{G}}$ and $Fx \in \mathcal{F}$, it must be true that $Fx = F$. Therefore $x$ is contained in the $G$-stabilizer of $F$. The $G$-stabilizer has order $k$, hence, $k$ is even.
\end{proof}

Generalizing the proof of Theorem 3.5 of \cite{buratti1-rotational2007}, we obtain the following statement.

\begin{theorem}\label{generalizedthm}
If $G$ is $R_{2^nm}$ where $n\geq 1$, and $m$ is odd, then $G$ has at most $m(2^n-1)$ conjugacy classes containing involutions.
\end{theorem}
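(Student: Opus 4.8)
The plan is to reduce the global statement about conjugacy classes in $G$ to a purely local count of involutions inside a single factor-stabilizer, and then to bound that count by Sylow theory.

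First I would fix a $1$-rotational $2^nm$-factorization $\mathcal{F}$ of $K_{\overline{G}}$ (available since $G$ is $R_{2^nm}$), choose one factor $F_0 \in \mathcal{F}$, and set $H = \mathrm{Stab}_G(F_0)$. By Theorem \ref{thm1} and the remark following it, $F_0$ is a $2^nm$-starter, so $|H| = 2^nm$, and the $G$-orbit of $F_0$ is all of $\mathcal{F}$. Since $|\mathcal{F}| = |G|/(2^nm) = [G:H]$, the action of $G$ on $\mathcal{F}$ is transitive with point stabilizer $H$, and consequently the stabilizer of every factor is a conjugate of $H$.

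Next comes the key reduction. By Lemma \ref{lem1}, every involution $x \in G$ lies in the $G$-stabilizer of some factor, i.e. in some conjugate $gHg^{-1}$; hence $g^{-1}xg$ is an involution of $H$ that is $G$-conjugate to $x$. Therefore every $G$-conjugacy class of involutions meets $H$, and since distinct classes meet $H$ in disjoint sets, the number of conjugacy classes of $G$ containing involutions is at most the number of involutions contained in $H$.

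It then remains to show that $H$ has at most $m(2^n-1)$ involutions, and this is the step I expect to carry the real weight. Let $P$ be a Sylow $2$-subgroup of $H$; then $|P| = 2^n$, so $P$ contains at most $2^n-1$ involutions, having only $2^n-1$ non-identity elements. Every involution of $H$ generates a subgroup of order $2$ and so lies in some Sylow $2$-subgroup, so the involutions of $H$ are covered by its Sylow $2$-subgroups. By Sylow's theorem the number of these subgroups divides $m$, hence is at most $m$; a union bound over the (at most $m$) Sylow $2$-subgroups, each contributing at most $2^n-1$ involutions, gives at most $m(2^n-1)$ involutions in $H$. Combined with the previous paragraph this yields the theorem. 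The main obstacle to state carefully is really the reduction step --- that transitivity of $G$ on $\mathcal{F}$ forces all factor-stabilizers to be conjugate to the single group $H$ --- since this is exactly what lets a count ranging over all of $G$ be replaced by a count inside a group of order $2^nm$; the final inequality is typically far from sharp, because involutions may be shared among several Sylow $2$-subgroups and several involutions of $H$ may be $G$-conjugate, and the content of the bound is that it survives these overcounts.
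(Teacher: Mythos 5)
Your proof is correct and follows essentially the same route as the paper's: reduce to a single factor-stabilizer $S$ of order $2^nm$ via transitivity of the $G$-action on the factorization together with Lemma \ref{lem1}, so that every conjugacy class of involutions meets $S$, and then bound the involutions of $S$ by noting its Sylow $2$-subgroups number at most $m$ and each contains at most $2^n-1$ involutions. Your write-up simply makes explicit a few steps the paper leaves implicit (the orbit-counting argument for transitivity and the divisibility $n_2 \mid m$ from Sylow's theorem).
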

\begin{proof}
Let $\mathcal{F}$ be a $1$-rotational $k$-factorization of $K_{\overline{G}}$, and let $F$ be a factor in $\mathcal{F}$, denoting it's $G$-stabilizer as $S$. We show that $S$ contains at least one representative from every conjugacy class of $G$ containing involutions. Let $C$ be a conjugacy class containing involutions, and let $x$ be an element of $C$. Since the action of $G$ on the factors of $\mathcal{F}$ is transitive, the $G$-stabilizers of the factors of $\mathcal{F}$ are pairwise conjugate. By Lemma \ref{lem1}, there exists a factor $F'$ whose stabilizer $S'$ contains $x$. Since $S=g^{-1}S'g$ for some $g \in G$, $S$ contains the element $g^{-1}xg$, which is a representative of $C$. Because $|S|=k=2^nm$ there are at most $m$ Sylow $2$-subgroups of $S$. Since every involution is in some Sylow $2$-subgroup, $S$ contains at most $m(2^n-1)$ involutions, hence $G$ has at most $m(2^n-1)$ conjugacy classes containing involutions.
\end{proof}

\begin{theorem}\label{conjthm}
If $G$ is $R_{2^nm}$ where $n\geq 1$ and $m$ is odd, and $G$ has $m(2^n-1)$ conjugacy classes of involutions, then the multiplication of any central involution with an involution in one conjugacy class yields an involution in a different conjugacy class.
\end{theorem}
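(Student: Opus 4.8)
The plan is to reuse the counting from the proof of Theorem~\ref{generalizedthm} to pin down exactly which involutions lie in a single stabilizer, and then to exploit that a central involution $z$ stays inside that stabilizer so that the product $zx$ never leaves it. Fix a factor $F \in \mathcal{F}$ with $G$-stabilizer $S$, so that $|S| = 2^n m$. The proof of Theorem~\ref{generalizedthm} shows that $S$ meets every conjugacy class of $G$ that contains involutions and that $S$ contains at most $m(2^n-1)$ involutions. First I would extract the consequence that matters under the extra hypothesis. The map sending each involution of $S$ to its $G$-conjugacy class is surjective onto the set of $m(2^n-1)$ classes, so $S$ has at least $m(2^n-1)$ involutions; combined with the upper bound, $S$ has exactly $m(2^n-1)$ involutions. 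A surjection between two finite sets of equal size is a bijection, so this map is injective: distinct involutions of $S$ lie in distinct conjugacy classes of $G$.

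Next I would locate a central involution inside this particular stabilizer. Let $z$ be any central involution of $G$ (if there is none, the statement is vacuous). By Lemma~\ref{lem1}, $z$ belongs to the stabilizer $S'$ of some factor, and since $G$ acts transitively on $\mathcal{F}$ we have $S = g^{-1}S'g$ for some $g \in G$. As $z$ is central, $g^{-1}zg = z$, and hence $z \in S$.

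Finally I would run a conjugation reduction and conclude. Let $x$ be an involution with $x \neq z$ in a class $C$; the degenerate case $x = z$ is excluded since then $zx = 1_G$ is not an involution. The class $C$ has a representative $\hat{x} \in S$, and because $z$ is central, $\hat{x} = g^{-1}xg$ gives $z\hat{x} = g^{-1}(zx)g$, so $z\hat{x} \sim zx$ while $\hat{x} \sim x$; thus $zx \sim x$ if and only if $z\hat{x} \sim \hat{x}$, and I may assume $x \in S$. Then $z, x \in S$ force $zx \in S$, and $zx$ is an involution distinct from $x$ (as $z \neq 1_G$ and $x \neq z$), so by the bijection established above $zx$ lies in a conjugacy class different from that of $x$. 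I expect the main obstacle to be the two structural facts rather than the final manipulation: that the exact count $m(2^n-1)$ forces the involutions of $S$ to be pairwise non-conjugate (a pigeonhole argument resting on the coincidence of the upper and lower bounds), and that a central involution necessarily lands in the fixed stabilizer $S$. Once these are in place, the reduction to $x \in S$ is the only delicate point, and it hinges precisely on using the centrality of $z$ to transport the relation $zx \sim x$ along conjugation.
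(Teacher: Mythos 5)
Your proof is correct and follows essentially the same route as the paper: the same Sylow-based count and pigeonhole argument showing a stabilizer contains exactly one involution from each conjugacy class, and the same observation that a central involution must lie in the stabilizer. The only difference is organizational: the paper applies Lemma \ref{lem1} to the involution $x$ itself, obtaining a stabilizer $S$ that already contains $x$, which renders your conjugation-transport step ($\hat{x}=g^{-1}xg$, $z\hat{x}=g^{-1}(zx)g$) unnecessary.
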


\begin{proof}
Let $\mathcal{F}$ be a $1$-rotational $2^nm$-factorization of $K_{\overline{G}}$. Given an involution $x$ of $G$, we have by Lemma \ref{lem1} that there exists a factor $F \in \mathcal{F}$ whose $G$-stabilizer $S$ contains $x$. 

Notice that if $z$ is a central involution, then $z \in S$ since $z$ is conjugate only to itself. Because $S$ contains at least one representative from each conjugacy class of involutions and because $G$ has $m(2^n-1)$ conjugacy classes of involutions, it follows from the fact that $S$ has at most $m(2^n-1)$ involutions that $S$ contains precisely one representative from each conjugacy class of involutions. Thus, $xz$ lies in a different conjugacy class from $x$ since $xz \in S$.
\end{proof}

\section{A $1$-Rotational $4$-Factorization of $K_{\overline{D_{4N}}}$}
Let $D_{2N}$ be the dihedral group of order $2N$, namely the group with the defining relations: 
\[
\langle r,s : r^{N}=s^2=1 , srs=r^{-1} \rangle.
\]
\begin{theorem}
$D_{2N}$ is $R_4$ if and only if $N \equiv 2$ $\modd(4)$.
\end{theorem}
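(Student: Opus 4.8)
The plan is to treat the two directions separately: necessity follows quickly from Theorems \ref{generalizedthm} and \ref{conjthm}, while sufficiency requires building an explicit $4$-starter, which is where the real work lies.

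For necessity, I would first pin down the involutions of $D_{2N}$. A $1$-rotational $4$-factorization forces $4 \mid |D_{2N}| = 2N$, so $N$ is even; assuming this, the involutions are the $N$ reflections $r^is$ together with the central rotation $r^{N/2}$. Since conjugation by $r$ sends $r^is \mapsto r^{i+2}s$ and conjugation by $s$ sends $r^is \mapsto r^{-i}s$, the reflections split into exactly two classes by the parity of $i$, and $r^{N/2}$ forms its own (central) class; thus $D_{2N}$ has exactly three conjugacy classes containing involutions. Writing $4 = 2^2$, this meets the extremal bound $m(2^n-1) = 3$ of Theorem \ref{generalizedthm} with $n=2$, $m=1$, so Theorem \ref{conjthm} applies and the product of the central involution $r^{N/2}$ with any involution must lie in a different class. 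Applying this to the even reflection $s$, the reflection $r^{N/2}s$ must be odd-indexed, which forces $N/2$ to be odd, i.e. $N \equiv 2 \pmod 4$; equivalently, $N \equiv 0 \pmod 4$ would return $r^{N/2}s$ to the even class, contradicting Theorem \ref{conjthm}.

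For sufficiency, suppose $N \equiv 2 \pmod 4$ and write $N = 2M$ with $M$ odd, so $|D_{2N}| = 4M$. By Theorem \ref{thm1} it suffices to produce a $4$-starter, and the necessity analysis already prescribes the stabilizer: a subgroup of order $4$ meeting each involution class. I would take the Klein four-group $S = \{1, r^{M}, s, r^{M}s\}$, which contains the central involution $r^M = r^{N/2}$, the even reflection $s$, and (since $M$ is odd) the odd reflection $r^{M}s$, and then construct an $S$-invariant $4$-factor $F$ of $K_{\overline{D_{2N}}}$ around it.

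The organizing principle is that right multiplication preserves differences, so an entire $S$-orbit of edges carries a single difference. A free $S$-orbit (size $4$) of edges joining distinct cosets of $S$ contributes a non-involution difference four times, exactly the multiplicity a $4$-starter needs, while the involution differences (the central $r^{N/2}$ and all reflections $r^js$) are realized by edges lying inside one $S$-coset, whose orbits have size $2$ and contribute their difference four times as well. Concretely I would make $\infty$ adjacent to the coset $S$ (one fixed edge-orbit of size $4$), cover the $M-1$ non-involution rotation difference-pairs $\{r^j, r^{-j}\}$ by size-$4$ inter-coset orbits, and cover the $2M+1$ involution differences by size-$2$ intra-coset orbits; a count confirms this uses $8M-2$ finite edges, matching $4$-regularity. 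The main obstacle is the simultaneous balancing: arranging these edge-orbits so that the result is genuinely $4$-regular at every vertex (including the bookkeeping at the coset adjacent to $\infty$) while every nonidentity difference is covered and the stabilizer stays exactly $S$ rather than enlarging. Once such an $F$ is exhibited, Theorem \ref{thm1} delivers the $1$-rotational $4$-factorization and closes the equivalence.
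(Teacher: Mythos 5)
Your necessity argument is correct and is essentially the paper's: divisibility forces $N$ even, the three involution classes of $D_{2N}$ (even-indexed reflections, odd-indexed reflections, and the central $r^{N/2}$) meet the extremal bound $m(2^n-1)=3$ of Theorem \ref{generalizedthm}, and Theorem \ref{conjthm} then excludes $N \equiv 0 \pmod{4}$ because $r^{N/2}s$ would lie in the same class as $s$. You spell this out in more detail than the paper does, and that half stands.

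The sufficiency direction, however, has a genuine gap: you never construct the $4$-starter. What you provide is the correct target --- a factor whose stabilizer is the Klein four-group $S=\{1,\,r^{M},\,s,\,r^{M}s\}$, which is exactly the stabilizer appearing in the paper's construction --- together with consistent orbit and difference bookkeeping (free $S$-orbits for the $M-1$ rotation difference pairs, size-$2$ orbits for the $2M+1$ involution differences, $8M-2$ finite edges). But this is only a feasibility count, not a graph, and you concede the point yourself when you call the ``simultaneous balancing'' the main obstacle and write ``once such an $F$ is exhibited.'' Exhibiting $F$ \emph{is} the substance of this direction; arranging edge orbits to be simultaneously $4$-regular, difference-covering, and stabilized by exactly $S$ (not more) is not a routine verification that can be waved through. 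The paper sidesteps precisely this difficulty by not building from scratch: since $N \equiv 2 \pmod 4$, the dihedral group $D_N$ of order $N$ admits a known $2$-starter $F$ with stabilizer $\{1,s\}$ (Theorem 5.1 of \cite{buratti1-rotational2007}), and the $4$-starter $H$ under $D_{2N}$ is obtained by doubling it --- for each finite edge $[x,y]$ of $F$ one takes the two edges $[x,y]$ and $[x,r^{N/2}y]$, and one adds the explicit gadget $[\infty,1],[\infty,s],[\infty,r^{N/2}],[\infty,r^{N/2}s],[s,r^{N/2}s],[1,r^{N/2}]$ --- after which $4$-regularity, coverage of all differences, and the stabilizer computation are direct checks. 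To repair your proof you would need either to carry out your balancing explicitly (in effect rediscovering such a doubling construction) or to import a known $2$-starter as the paper does.
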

\begin{proof}

Let $D_{2N}$ be $R_4$, then clearly $4||D_{2N}|=2N$, so either $N \equiv 0$ $\modd (4)$ or $N \equiv 2$ $\modd(4)$. However, if $N \equiv 0$ $\modd(4)$, then the central element $r^{N/2}$ is an even power of $r$ so $r^{N/2}s$ lies in the same conjugacy class as $s$. This is a contradiction to Theorem \ref{conjthm}, proving one direction of our claim.

Now let $N \equiv 2$ $\modd(4)$, and consider the $2$-starter $F$ under $D_{N}$ presented in Theorem 5.1 of \cite{buratti1-rotational2007} whose stabilizer is $\{1,s\}$. Let $H$ be the subgraph of $K_{\overline{D_{2N}}}$ with the edge-set:
\begin{align*}
E(H)&=\{[x,y],[x,r^{N/2}y]\ :\ [x,y]\in E(F),\ x,y\neq \infty\}\\
&\cup \{[\infty,1],[\infty,s],[\infty,r^{N/2}],[\infty,r^{N/2}s],[s,r^{N/2}s],[1,r^{N/2}]\},
\end{align*}
where an element $r^is^j$ ($i\in \{0,\dots,N/2-1\}$, $j\in \{0,1\}$) of $D_N$ will simply be the element $r^is^j$ when regarded as an element of $D_{2N}$.
It is easy to verify that $H$ is a $4$-starter under $D_{2N}$ whose stabilizer is $\{1,s,r^{N/2},r^{N/2}s\}$, thus $D_{2N}$ is $R_4$. 
\end{proof}
\theoremstyle{definition}
\newtheorem{example}{Example}[section]

\section{Using a $2$-Starter of $K_{\overline{G}}$ to Obtain a $2n$-Starter of $K_{\overline{G \times \mathbb{Z}_n}}$ and New Solutions to the Oberwolfach Problem}
In the following theorem we provide a construction for a $2n$-starter of $K_{\overline{G \times \mathbb{Z}_n}}$ given a $2$-Starter of $K_{\overline{G}}$. Here we will denote the operation in $G$ as regular group multiplication and the operation in $\mathbb{\mathbb{Z}}_n$ as addition. Also we will refer to the elements of $G \times \mathbb{Z}_n$ as ordered pairs $(g,k)$ where $g \in G$ and $k \in \mathbb{Z}_n$.
\begin{theorem}\label{2nthm}
Given a $2$-starter of $K_{\overline{G}}$, there exists a $2n$-starter of $K_{\overline{G \times \mathbb{Z}_n}}$.
\end{theorem}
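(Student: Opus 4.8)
The plan is to construct the $2n$-starter $H$ of $K_{\overline{G\times\mathbb{Z}_n}}$ explicitly from the given $2$-starter $F$ by a ``lexicographic lift'' of the finite part of $F$, augmented by suitable edges at $\infty$ and two small cliques that supply the missing ``vertical'' differences. First I would record the structure forced by the hypotheses. Since $F$ is a $2$-starter, its $G$-stabilizer is a two-element group $\{1_G,s\}$, and by (the proof of) Lemma \ref{lem1} the element $s$ is an involution. Because $\infty s=\infty$ and $Fs=F$, right multiplication by $s$ must permute the two finite neighbours of $\infty$; as $ps=p$ would force $s=1_G$, the two neighbours are swapped by $s$, hence they are of the form $a$ and $as$ for some $a\in G$ with $a\neq as$. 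I would then define $H$, writing the vertices of $K_{\overline{G\times\mathbb{Z}_n}}$ as $\infty$ together with pairs $(g,i)$, by
\begin{align*}
E(H)&=\{[(x,i),(y,j)]\ :\ [x,y]\in E(F),\ x,y\neq\infty,\ i,j\in\mathbb{Z}_n\}\\
&\cup\{[\infty,(a,i)],[\infty,(as,i)]\ :\ i\in\mathbb{Z}_n\}\\
&\cup\{[(a,i),(a,j)],[(as,i),(as,j)]\ :\ i,j\in\mathbb{Z}_n,\ i\neq j\}.
\end{align*}
The first family replaces every finite edge of $F$ by all $n^2$ edges joining the two fibres over its endpoints, the second joins $\infty$ to the full fibres over $a$ and $as$, and the third makes each of those two fibres a clique $K_n$.

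Next I would verify that $H$ is a $2n$-factor. A generic finite vertex $(x,i)$ with $x\notin\{a,as\}$ has two \emph{finite} $F$-neighbours $y_1,y_2$, and the lift contributes exactly $2n$ edges to $(x,i)$. A fibre vertex $(a,i)$ (and symmetrically $(as,i)$) receives $n$ edges from the lift (its unique finite $F$-neighbour), one edge to $\infty$, and $n-1$ edges inside its clique, for a total of $2n$; and $\infty$ has degree $2n$ from the second family. A quick global sanity check is that $n^{2}(|G|-1)+2n+n(n-1)=n^{2}|G|+n$ equals the required edge count $\tfrac{1}{2}(n|G|+1)(2n)$, which confirms the degree bookkeeping. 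Simplicity of $H$ follows since the lift edges join distinct fibres while the clique edges stay within a fibre.

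I would then check the two defining conditions of a $2n$-starter. For the difference condition, the lift edges produce the differences $(xy^{-1},i-j)$ with $xy^{-1}$ ranging over all of $G-\{1_G\}$ (as $F$ is a $2$-starter) and $i-j$ over all of $\mathbb{Z}_n$, covering every $(g,k)$ with $g\neq 1_G$; the two cliques produce the differences $(1_G,i-j)$ with $i\neq j$, covering every $(1_G,k)$ with $k\neq 0$. For the stabilizer condition I would show $\mathrm{Stab}(H)=\{1_G,s\}\times\mathbb{Z}_n$, of order $2n$. The inclusion $\supseteq$ is a direct check that right multiplication by $(1_G,l)$ and by $(s,l)$ preserves each of the three families, using $[xs,ys]=[x,y]s\in E(F)$ and $ass=a$ so that the roles of the $a$- and $as$-fibres are merely interchanged. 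For the reverse inclusion, I would use that any stabilizing $(g,l)$ must fix $\infty$ and permute its neighbourhood $\{(a,i),(as,i):i\in\mathbb{Z}_n\}$, which forces $\{ag,asg\}=\{a,as\}$ in $G$, and hence $g\in\{1_G,s\}$.

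The main obstacle is the stabilizer computation, specifically the upper bound $|\mathrm{Stab}(H)|\le 2n$: everything hinges on the neighbourhood of $\infty$ recording exactly the pair $\{a,as\}$, so I would take care to justify $a\neq as$ and the case analysis $\{ag,asg\}=\{a,as\}$ cleanly. The degree accounting at the two distinguished fibres over $a$ and $as$ is the secondary delicate point, since there the clique edges and the edges to $\infty$ must precisely make up the deficit left by the lexicographic lift; the remaining verifications are routine once the construction is in place.
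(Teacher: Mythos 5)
Your construction is exactly the paper's: the same three edge families (the lexicographic lift of the finite edges of $F$, the join of $\infty$ to the two fibres over the neighbours of $\infty$, and cliques on those two fibres), with the same degree count and the same covering-of-differences argument, your label $as$ for the second neighbour being the paper's $b$. The only minor divergence is the upper bound on the stabilizer: you force $g \in \{1_G,s\}$ by looking at how $(g,l)$ permutes the neighbourhood of $\infty$, whereas the paper notes that any $x \notin S$ displaces some finite edge of $F$ and hence $(x,0)$ displaces its lifted edge in $H$ — both are correct, and yours is if anything slightly cleaner.
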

\begin{proof}
Let $F$ be a $2$-starter of $K_{\overline{G}}$ with $G$-stabilizer say $S$, and let $a$ and $b$ be the vertices adjacent to $\infty$. Now consider the following subgraph $H$ of $K_{\overline{G \times \mathbb{Z}_n}}$
\begin{align*}
 &\{[\infty,\ (x,\ k)]:x \sim \infty \textrm{ in } F \textrm{ and } k \in \mathbb{Z}_n\} \\
\cup\ &\{[(x,\ k),\ (x,\ r)]:x \sim \infty \textrm{ in } F \textrm{ and } k,\ r \in \mathbb{Z}_n,\ k \neq r\}\\
\cup\ &\{[(x,\ k),\ (y,\ r)]:x \sim y \textrm{ in } F \textrm{ and } k,\ r \in \mathbb{Z}_n\}.
\end{align*}

First we show that $H$ is a $2n$-factor. If $x$ is an element not adjacent to $\infty$, then $x$ is adjacent to precisely two distinct elements, say $y$ and $z$, of $G$ in $F$. Therefore for each $k \in \mathbb{Z}_n$, $(x,\ k)$ is adjacent to vertices of the form $(y,\ r)$ for all $r \in \mathbb{Z}_n$, and vertices of the form $(z,\ t)$ for all $t \in \mathbb{Z}_n$. This shows that $(x,\ k)$ has degree $2n$ for all $k$. 

Also $(a,\ k)$ is adjacent to vertices of the form $(b,\ r)$ for all $r \in \mathbb{Z}_n$, $(a,\ t)$ where $t \in \mathbb{Z}_n$ and $t \neq k$, and $\infty$ showing that $(a,\ k)$ has degree $2n$ for all $k$. The same argument shows that $(b,\ k)$ has degree $2n$ for all $k$, and clearly $\infty$ has degree $2n$. Therefore $H$ is a $2n$-factor.

To check property one of a $2n$-starter (see Definition \ref{strtrdef}) denote the $G \times \mathbb{Z}_n$-stabilizer of $H$ as $S'$; we will show that $S'=S \times \mathbb{Z}_n$. Notice that every element of $G \times \mathbb{Z}_n$ of the form $(1_G,\ k)$ fixes $H$. Also, if $s$ is the nontrivial element in $S$ (recall that $S$ is the $G$-stabilizer of $F$), then the element $(s,\ 0)$ fixes $H$.

Therefore, $S \times \mathbb{Z}_n \subset S'$. To show $S \times \mathbb{Z}_n = S'$, let $x$ be an element of $G-S$. If $(x,\ k) \in S'$ for some $k$ then $(x,\ k)(1_G,\ k)^{-1}(x,\ 0) \in S'$. However because $x \notin S$, there is some edge $[u,\ v] \in F$ such that $[u,\ v]x$ does not lie in $F$, but this implies that the edge $[(u,\ 0),\ (v,\ 0)](x,\ 0)$ does not lie in $H$. This is a contradiction, thus, we have proven that $S \times \mathbb{Z}_n = S'$. Since $|S \times \mathbb{Z}_n| = 2n$, it has been verified that $H$ satisfies property one of a $2n$-starter.

To check property two of a $2n$-starter let $(x,\ k) \in (G \times \mathbb{Z}_n)-(1_G,\ 0)$ where $x \neq 1_G$. There exists an edge $[w,\ p]$ of $F$ such that $wp^{-1}=x$, so the edge $[(w,\ k),\ (p,\ 0)]$ of $H$ contributes the value $(w,\ k)(p,\ 0)^{-1}=(wp^{-1},\ k)=(x,\ k)$ to $\Delta H$. If $x = 1_G$, then the edge $[(a,\ k),\ (a,\ 0)]$ contributes $(1_G,\ k)=(x,\ k)$ to $\Delta H$. Therefore $\Delta H$ covers $(G \times \mathbb{Z}_n)-(1_G,\ 0)$, showing that $H$ satisfies property two of a $2n$-starter. This along with the above proves that $H$ is indeed a $2n$-starter.
\end{proof}

We will now consider a $2$-starter $F$ under a finite group $G$; $F$ is isomorphic to a union of cycles $C_{a_{\infty}} \cup C_{a_1} \cup C_{a_2} \cup \cdots \cup C_{a_N}$. Let $m=a_{\infty}-3$ (notice $m$ is even), we will write $F$ more explicitly as the union of cycles
\[
(\infty,\ a,\ y_1,\ y_2, \cdots,\ y_m,\ b)
\]
\[
\cup \ (x^1_1,\ x^1_2, \cdots,\ x^1_{a_{1}})
\]
$$\vdots$$
\[
\cup \ (x^N_1,\ x^N_2, \cdots,\ x^N_{a_{N}}).
\]
Let $p$ be a prime, and consider the $2p$-starter $H$ under $G \times \mathbb{Z}_p$ obtained from applying Theorem \ref{2nthm} to $F$. We will show that $H$ can be decomposed into $p$ isomorphic $2$-factors when $p$ is an odd prime, and $H$ can be decomposed into $2$ isomorphic $2$-factors when $p=2$ and the $a_i$'s ($i \neq \infty$) are even. 

Let $r_i$ be the smallest non-negative number such that $r_i \equiv a_i$ $\modd(p)$, if $r_i \not\equiv 1$ $\modd(p)$, then for $i \in \{1,2, \ldots, N\}$, $j \in \{0,1, \ldots, p-1\}$ define \newline \newline
$
F_{ij} =
$
\[
\bigcup_{k=0}^{p-1} ((x_1^i,k),(x_2^i,j+k),(x_3^i,2j+k),...,(x_{a_i-1}^i,(r_i-2)j+k),(x_{a_i}^i,(r_i-1)j+k)).
\]
If $r_i \equiv 1 $ $\modd(p)$, then define \newline \newline
$
F_{ij} =
$
\[
\bigcup_{k=0}^{p-1} ((x_1^i,k),(x_2^i,j+k),(x_3^i,2j+k),...,(x_{a_i-1}^i,(p-1)j+k),(x_{a_i}^i,(p-2)j+k)).
\]
Notice that each $F_{ij}$ is isomorphic to $\bigcup_{t=1}^pC_{a_i}$. 
Let $H_{\infty}$ be the complete graph with vertex set 
\[
\{(a,k),\ (b,k) : k \in \mathbb{Z}_p\}\cup\{\infty\}.
\] 
Thus we can use the Walecki construction to decompose $H_{\infty}$ into $p$ cycles of length $2p+1$; we will do this explicitly. Arrange the vertices of $H_{\infty}$ except $\infty$ in a circle counterclockwise in this order 
\[
(a,0),\ (b,0),\ (a,1),\ (b,1),\ (a,2),\ (b,2),\ \dots,\ (a,p-1),\ (b,p-1).
\]
First consider the $2p+1$-cycle
\begin{align*}
&E_0= \\
&((a,0),(b,0),(b,p-1),(a,1),(a,p-1),(b,1), \dots, (a,\tfrac{p-1}{2}+1),(b,\tfrac{p-1}{2}),\infty),
\end{align*}
and define $E_i$ as the $2p+1$-cycle obtained by ticking each vertex of $E_0$ that is not the $\infty$ vertex counterclockwise $i$ times. For example, 
\begin{align*}
&E_1= \\
&((b,0),(a,1),(a,0),(b,1),(b,p-1),(a,2), \dots, (b,\tfrac{p-1}{2}+1),(a,\tfrac{p-1}{2}+1),\infty).
\end{align*}
Since the $E_i$'s were constructed by the Walecki construction, the $E_i$'s form a decomposition of $H_{\infty}$. 

Now define $\overline{H_{\infty}}$ as the induced subgraph of $H$ with the vertex set 
\[
\{(a,\ k),\ (b,\ k),\ (y_i,\ k)\ :\ i \in \{1,\ \dots,\ m\},\ k \in \mathbb{Z}\}
.\]
We will find a decomposition of $\overline{H_{\infty}}$ into cycles of length $p(a_{\infty}-1)+1$. Consider the $2p+1$-cycle $E_i$; whenever two vertices of the form $(a,\ k)$ and $(b,\ t)$ are adjacent in $E_i$, insert the path 
\[
[(y_1,\ t),\ (y_2,\ k),\ (y_3,\ t),\ \dots \ , (y_m,\ k)]
\]
(the second component of the ordered pairs alternate between $t$ and $k$) between $(a,\ k)$ and $(b,\ t)$. Denote the resulting cycle as $\overline{E_i}$. In other words if
\[
E_i=(\dots,\ (a,\ k),\ (b,\ t),\ \dots,\ \infty),
\]
then
\[
\overline{E_i}=(\dots,\ (a,\ k),\ (y_1,\ t),\ (y_2,\ k),\ (y_3,\ t),\ \dots \ ,\ (y_m,\ k),\ (b,\ t),\ \dots,\ \infty).
\]
Or if
\[
E_i=(\dots,\ (b,\ t),\ (a,\ k),\ \dots,\ \infty),
\]
then
\[
\overline{E_i}=(\dots,\ (b,\ t),\ (y_m,\ k),\ (y_{m-1},\ t),\ (y_{m-2},\ k),\ \dots \ ,\ (y_1,\ t),\ (a,\ k),\ \dots,\ \infty).
\]
For example, 
\begin{align*}
&\overline{E_0}=\\ 
&((a,\ 0),\ (y_1,\ 0),\ (y_2,\ 0),\ (y_3,\ 0),\ \dots,\ (y_m,\ 0),\ (b,\ 0),\ \\ 
&(b,\ p-1),\ (y_m,\ 1),\ (y_{m-1},\ p-1),\ (y_{m-2},\ 1), \dots, (y_1,\ p-1),\ (a,\ 1),\ \\ 
&(a,\ p-1),\ (y_1,\ 1),\ (y_2,\ p-1),\ (y_3,\ 1),\ \dots,\ (y_m,\ p-1),\ (b,\ 1),\ \dots, \\ 
&(a, \tfrac{p-1}{2}+1),(y_1, \tfrac{p-1}{2}), (y_2, \tfrac{p-1}{2}+1), (y_3, \tfrac{p-1}{2}), \dots, (y_m, \tfrac{p-1}{2}+1), (b, \tfrac{p-1}{2}), \infty).
\end{align*}
Notice that each $\overline{E_i}$ is a cycle of length $p(a_{\infty}-1)+1$. Also the $\overline{E_i}$'s are edge disjoint (we show this in the corollary below), so they form a decomposition of $\overline{H_{\infty}}$.

Now let 
\[
H_j= \overline{E_j}\ \cup  \biggl{\{}\bigcup_{i=1}^N\ F_{ij}\biggr{\}}
\]
for all $j \in \{0,\ \dots,\ p-1\}$. In the case that $p$ is an odd prime or $p=2$ and the $a_i$'s ($i\neq \infty$) are even,
each $H_j$ is a $2$-factor isomorphic to 
\[
C_{p(a_{\infty}-1)+1}\ \cup\ \biggl{\{}\bigcup_{k=1}^pC_{a_1}\biggr{\}}\ \cup\ \cdots \cup\ \biggl{\{}\bigcup_{k=1}^pC_{a_N}\biggr{\}},
\]
and the set $\{H_j\}_{j=0}^{p-1}$ forms a decomposition of $H$. This leads us to the following result where we also verify this fact.
\begin{theorem}\label{cor2}
Assume there exists a $1$-rotational solution to $\\ OP(a_{\infty},\ a_1,\  a_2,\  \cdots, a_N)$ under some finite group $G$. If the $a_i$'s ($i\neq \infty$) are even, then there exists a solution to $OP(2a_{\infty}-1, ^2a_1, ^2a_2, \cdots, ^2a_N)$. If $p$ is an odd prime, then there exists a solution to $OP(p(a_{\infty}-1)+1,\ ^pa_1,\ ^pa_2,\ \cdots,\ ^pa_N)$.
\end{theorem}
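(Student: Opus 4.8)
The plan is to reduce the theorem to the single assertion that the explicitly constructed family $\{H_j\}_{j=0}^{p-1}$ is a decomposition of the $2p$-starter $H$ into pairwise isomorphic $2$-factors, and then to let the group act. Granting this, since $H$ is a $2p$-starter (Theorem \ref{2nthm}) its $(G\times\mathbb{Z}_p)$-orbit is a $1$-rotational $2p$-factorization of $K_{\overline{G\times\mathbb{Z}_p}}$ by Theorem \ref{thm1}; fixing a transversal $T$ of the stabilizer of $H$, the translates $\{Hg : g\in T\}$ partition the edge set, and each splits as $E(Hg)=\bigsqcup_j E(H_jg)$. Hence $\{H_jg : g\in T,\ 0\le j\le p-1\}$ is a $2$-factorization of $K_{\overline{G\times\mathbb{Z}_p}}$, and every one of its $2$-factors is isomorphic to the common cycle type $C_{p(a_\infty-1)+1}\cup\bigcup_{i=1}^N\bigl(\bigcup_{k=1}^p C_{a_i}\bigr)$. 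Reading off parameters, $p=2$ yields $OP(2a_\infty-1,\,{}^{2}a_1,\dots,{}^{2}a_N)$ and an odd prime $p$ yields $OP(p(a_\infty-1)+1,\,{}^{p}a_1,\dots,{}^{p}a_N)$, which is the claim. So everything reduces to the decomposition, and I would partition $E(H)$ into the $N$ ``blow-up'' pieces coming from the cycles $C_{a_i}$ ($i\neq\infty$) and the piece $\overline{H_\infty}$ coming from the cycle through $\infty$, handling each separately.

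For the blow-up of $C_{a_i}=(x_1^i,\dots,x_{a_i}^i)$, the relevant edges of $H$ form a complete bipartite graph $K_{p,p}$ between each consecutive pair of groups $\{(x_t^i,\cdot)\}$, $\{(x_{t+1}^i,\cdot)\}$, and I would track each edge by its \emph{slope}, the difference of its second coordinates. In $F_{ij}$ each internal step has slope $j$ (save the single bent step in the $r_i\equiv 1\ \modd(p)$ case), so as $k$ runs over $\mathbb{Z}_p$ these fill the $p$ edges of slope $j$, and as $j$ runs over $\mathbb{Z}_p$ they fill the whole $K_{p,p}$. The point requiring care is the closing step between $x_{a_i}^i$ and $x_1^i$: its slope is $-(a_i-1)j$, and for $\{F_{ij}\}_j$ to cover all $p$ slopes on this last $K_{p,p}$ one needs $a_i-1$ invertible modulo $p$. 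This is exactly why the construction branches. When $r_i\not\equiv 1\ \modd(p)$ invertibility holds directly; when $r_i\equiv 1\ \modd(p)$ the last internal step is bent to $-j$, making the closing slope $2j$, which runs over all of $\mathbb{Z}_p$ precisely because $p$ is odd (the residue $r_i\equiv 1$ never occurs when $p=2$, since then each $a_i$ is assumed even, so $r_i\equiv 0$). Combined with the fact that each $F_{ij}$ is by construction $p$ vertex-disjoint $a_i$-cycles, this shows $\{F_{ij}\}_j$ decomposes the $C_{a_i}$-blow-up into $p$ copies of $\bigcup_{k=1}^p C_{a_i}$.

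The heart of the argument, and the step I expect to be the main obstacle, is the cycle through $\infty$: showing $\{\overline{E_j}\}_{j=0}^{p-1}$ decomposes $\overline{H_\infty}$ into Hamiltonian cycles of length $p(a_\infty-1)+1$. Walecki already gives that the $E_j$ partition $E(H_\infty)=E(K_{2p+1})$ into $p$ Hamiltonian cycles, so all the work lies in the path-insertion. I would first check that each $E_j$ has exactly $p$ edges joining an $a$-vertex to a $b$-vertex (the odd-length chords of the zigzag): these are the edges absent from $H$ that must be rerouted through the $y$-layer, and $p$ insertions of the $m$-vertex path is exactly what turns a $(2p+1)$-cycle into a cycle of length $(2p+1)+pm=p(a_\infty-1)+1$. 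The delicate bookkeeping is then twofold. First, Hamiltonicity: the path inserted at an edge $[(a,k),(b,t)]$ places the odd-indexed $y$'s at level $t$ and the even-indexed $y$'s at level $k$ (using that $m=a_\infty-3$ is even, so the path ends at level $k$), so $\overline{E_j}$ meets every $y$-vertex once iff the $a$-coordinates of the $p$ rerouted edges and, independently, their $b$-coordinates each exhaust $\mathbb{Z}_p$. I would verify this by computing the endpoints of the odd chords of $E_0$ explicitly, checking both coordinate sets equal $\{0,\dots,p-1\}$, and then transferring to $E_j$ via the ``ticking'' shift, which acts on these two coordinate sets by a translation (possibly swapping them), preserving the property. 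Second, edge-disjointness: a rerouted $y$-edge records through its endpoints' second coordinates precisely the pair $(k,t)$ of the unique $a$--$b$ edge that created it, and since the $a$--$b$ edges of $K_{2p+1}$ are partitioned among the $E_j$, each $y$-edge lies in exactly one $\overline{E_j}$; the remaining same-type and $\infty$-incident edges are untouched and already disjoint because the $E_j$ are. A short edge count confirms these exhaust $\overline{H_\infty}$.

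Finally I would assemble the three pieces: $E(H)$ partitions into the $N$ cycle-blow-ups and $\overline{H_\infty}$, so each $H_j=\overline{E_j}\cup\bigcup_{i=1}^N F_{ij}$ is a spanning $2$-regular subgraph of the stated cycle type and the $H_j$ partition $E(H)$, which is precisely the decomposition assumed in the first paragraph. The routine parts are the slope count for the $F_{ij}$ and the closing group action; the genuine obstacle is the Walecki step above, specifically the explicit coordinate computation guaranteeing that each $\overline{E_j}$ sweeps every $y$-vertex exactly once.
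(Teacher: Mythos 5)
Your proposal is correct and follows essentially the same route as the paper: verify that $\{H_j\}_{j=0}^{p-1}$ decomposes the $2p$-starter $H$ into pairwise isomorphic $2$-factors, then pass to the $(G\times\mathbb{Z}_p)$-orbit of $H$ and split each translate. Your slope bookkeeping for the $F_{ij}$'s and your tracing of each rerouted $y$-edge back to the unique $a$--$b$ chord that created it are the same computations the paper carries out vertex-by-vertex (your write-up additionally makes explicit the Hamiltonicity of each $\overline{E_j}$ and the coverage count, which the paper only asserts), so the two proofs differ only in presentation.
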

\begin{proof}
The same notation introduced in the above discussion will be used \\ throughout this proof. 

First consider the case where the $a_i$'s ($i\neq \infty$) are even. Applying the above discussion for $p=2$, it can easily be verified that $\{H_0, H_1\}$ is a decomposition of $H$ $2$-factors isomorphic to $C_{2a_{\infty}-1} \cup \{C_{a_1} \cup C_{a_1}\} \cup \{C_{a_2} \cup C_{a_2}\} \cup \cdots \cup \{C_{a_N} \cup C_{a_N}\}$. Now we can take the $4$-factorization $\mathcal{F}$ of $K_{\overline{G \times \mathbb{Z}_2}}$ that is the $G \times \mathbb{Z}_2$-orbit of $H$ and decompose each $4$-factor of $\mathcal{F}$ into two $2$-factors isomorphic to $C_{2a_{\infty}-1} \cup \{C_{a_1} \cup C_{a_1}\} \cup \{C_{a_2} \cup C_{a_2}\} \cup \cdots \cup \{C_{a_N} \cup C_{a_N}\}$. This then provides us with a $2$-factorization of $K_{\overline{G \times \mathbb{Z}_2}}$, and  a solution to $OP(2a_{\infty}-1, ^2a_1, ^2a_2, \cdots, ^2a_N)$.

Now consider the case that $p$ is an odd prime. First we will show that $F_{qj}$ is edge disjoint from $F_{si}$ if $q \neq s$ or $i \neq j$. It is clear that if $q \neq s$, then $F_{qj}$ is edge disjoint from $F_{si}$ for all $i$ and $j$ since, in this case, $F_{qj}$ and $F_{si}$ do not share any vertices in common. 

Notice that the fact that $F_{qj}$ and $F_{sj}$ are vertex disjoint when $q \neq s$ justifies the fact that each $H_j$ defined above is a union of cycles as we had stated.

If $q=s$ then $F_{qi}$ is edge disjoint from $F_{si}=F_{qj}$ for all $i \neq j$ as follows. 

Let $(x_t^q,\ k)$ be a vertex in $F_{qi}$ where $2 \leq t \leq a_q-1$ and $0 \leq k \leq p-1$. If $a_q \not\equiv 1$ $\modd(p)$, then $(x_t^q,\ k)$ is adjacent to $(x_{t-1}^q,\ k-i)$ and $(x_{t+1}^q,\ k+i)$ in $F_{qi}$. Similarly, $(x_t^q,\ k)$ is adjacent to $(x_{t-1}^q,\ k-j)$ and $(x_{t+1}^q,\ k+j)$ in $F_{qj}$. Since $k-i \not\equiv k-j$ $\modd(p)$ and $ k+i \not\equiv k+j$ $\modd(p)$, the edges connected to $(x_t^q,\ k)$ are different in $F_{qi}$ from $F_{qj}$. 

If $a_q \equiv 1$ $\modd(p)$, then the above holds except when $t=a_q-1$, in this case $(x_t^q,\ k)$ is adjacent to $(x_{t-1}^q,\ k-i)$ and $(x_{t+1}^q,\ k-i)$ in $F_{qi}$ and adjacent to $(x_{t-1}^q,\ k-j)$ and $(x_{t+1}^q,\ k-j)$ in $F_{qj}$, but the same argument still follows. 

If $t = 1$ and $a_q \not\equiv 1$ $\modd(p)$, then $(x_t^q,\ k)=(x_1^q,\ k)$ is adjacent to $(x_{a_q}^q,\ (r_q-1)j+k)$ and $(x_2^q,\  j+k)$ in $F_{qj}$, and $(x_1^q,\ k)$ is adjacent to $(x_{a_q}^q,\ (r_q-1)i+k)$ and $(x_2^q,\  i+k)$ in $F_{qi}$. Clearly $i+k \not\equiv j+k$ $\modd(p)$ and if $(r_q-1)i+k \equiv (r_q-1)j+k$ $\modd(p)$, then $(r_q-1)i \equiv (r_q-1)j$ $\modd(p)$ which implies that $i \equiv j$ $\modd(p)$ (since $gcd(r_q-1,\ p)=1$). This a contradiction, so the edges connected to $(x_1^q,\ k)$ are different in $F_{qi}$ than in $F_{qj}$. A similar argument can be made when $t=a_q$. 

When $a_q \equiv 1$ $\modd(p)$, $(x_1^q,\ k)$ is adjacent to $(x_{a_q}^q,\ (p-2)j+k)$ and $(x_2^q,\  j+k)$ in $F_{qj}$, and adjacent to $(x_{a_q}^q,\ (p-2)i+k)$ and $(x_2^q,\  i+k)$ in $F_{qi}$. A similar argument as sbove then shows that the edges connected to $(x_1^q,\ k)$ are different in $F_{qi}$ than in $F_{qj}$. A similar argument also follows for the vertex $(x_{a_q}^q,\ k)$. It follows that $F_{qi}$ and $F_{sj}$ are edge disjoint whenever $q \neq s$ or $i \neq j$.

Now we show that $\overline{E_j}$ is edge disjoint from $\overline{E_i}$ when $i \neq j$. Let $i \neq j$ and let $2 \leq l \leq m-1$,\  $k \in \mathbb{Z}_p$ and consider the vertex $(y_l,\ k)$. Let $(y_l,\ k)$ be adjacent to $(y_{l-1},\ t)$ and $(y_{l+1},\ t)$ in $\overline{E_j}$, then $(a,\ k)$ and $(b,\ t)$ are adjacent in $E_i$. Since $E_j$ is edge disjoint from $E_i$, $(a,\ k)$ and $(b,\ t)$ are not adjacent in $E_j$. Therefore, $(y_l,\ k)$ is not adjacent to $(y_{l-1},\ t)$ or $(y_{l+1},\ t)$ in $\overline{E_i}$. A similar argument can be made for vertices of the form $(a,\ k)$, $(b,\ k)$, $(y_1,\ k)$, and $(y_m,\ k)$, showing that $\overline{E_j}$ is edge disjoint from $\overline{E_i}$ when $i \neq j$. 

It then follows that that $H_i$ is edge disjoint from $H_j$ whenever $i \neq j$. Also, it can easily be checked that each edge of $H$ is contained in some $H_i$. Therefore, $\{H_k\}_{k=0}^{p-1}$ is a decomposition of $H$, so $H$ is decomposable into $p$, $2$-factors isomorphic to
\[
C_{p(a_{\infty}-1)+1}\ \cup\ \biggl{\{}\bigcup_{k=1}^pC_{a_1}\biggr{\}}\ \cup\ \cdots \ \cup\ \biggl{\{}\bigcup_{k=1}^pC_{a_N}\biggr{\}}.
\]
Thus, we can take all $2p$-factors of $H$ under the action of $G \times \mathbb{Z}_p$ and decompose each into $p$ copies of $C_{p(a_{\infty}-1)+1}\ \cup\ \{\bigcup_{k=1}^pC_{a_1}\}\ \cup\ \cdots \cup\ \{\bigcup_{k=1}^pC_{a_N}\}$. This will result in a $2$-factorization of $K_{\overline{G \times \mathbb{Z}_p}}$ into $2$-factors that are all isomorphic to $C_{p(a_{\infty}-1)+1}\ \cup\ \{\bigcup_{k=1}^pC_{a_1}\}\ \cup\ \cdots \cup\ \{\bigcup_{k=1}^pC_{a_N}\}$, and hence, will provide us with a solution to $OP(p(a_{\infty}-1)+1,\ ^pa_1,\ ^pa_2,\ \cdots,\ ^pa_N)$.
\end{proof}

\begin{example}
Consider the following $2$-starter 
\[
F = (\infty,\ 0,\ 3,\ 9,\ 6) \cup (1,\ 5,\ 4,\ 2,\ 7,\ 11,\ 10,\ 8)
\] 
under $\mathbb{Z}_{12}$. We can apply Theorem \ref{2nthm} for $p=3$ to this $2$-starter and obtain a $6$-starter under $\mathbb{Z}_{12} \times \mathbb{Z}_3$, say $H$. We demonstrate how $H$ can be decomposed into three isomorphic $2$-factors. Throughout this example, we will use the same notation as the discussion preceding Theorem \ref{cor2}. 

The $F{ij}$'s are 
\begin{align*}
    F_{10} = &((1,\ 0),\ (5,\ 0),\ (4,\ 0),\ (2,\ 0),\ (7,\ 0),\ (11,\ 0),\ (10,\ 0),\ (8,\ 0)) \\
    \cup\ &((1,\ 1),\ (5,\ 1),\ (4,\ 1),\ (2,\ 1),\ (7,\ 1),\ (11,\ 1),\ (10,\ 1),\ (8,\ 1)) \\
    \cup\ &((1,\ 2),\ (5,\ 2),\ (4,\ 2),\ (2,\ 2),\ (7,\ 2),\ (11,\ 2),\ (10,\ 2),\ (8,\ 2)),
\end{align*}
\begin{align*}
    F_{11} = &((1,\ 0),\ (5,\ 1),\ (4,\ 2),\ (2,\ 0),\ (7,\ 1),\ (11,\ 2),\ (10,\ 0),\ (8,\ 1)) \\
    \cup\ &((1,\ 1),\ (5,\ 2),\ (4,\ 0),\ (2,\ 1),\ (7,\ 2),\ (11,\ 0),\ (10,\ 1),\ (8,\ 2)) \\
    \cup\ &((1,\ 2),\ (5,\ 0),\ (4,\ 1),\ (2,\ 2),\ (7,\ 0),\ (11,\ 1),\ (10,\ 2),\ (8,\ 0)),
\end{align*}
\begin{align*}
    F_{12} = &((1,\ 0),\ (5,\ 2),\ (4,\ 1),\ (2,\ 0),\ (7,\ 2),\ (11,\ 1),\ (10,\ 0),\ (8,\ 2)) \\
    \cup\ &((1,\ 1),\ (5,\ 0),\ (4,\ 2),\ (2,\ 1),\ (7,\ 0),\ (11,\ 2),\ (10,\ 1),\ (8,\ 0)) \\
    \cup\ &((1,\ 2),\ (5,\ 1),\ (4,\ 0),\ (2,\ 2),\ (7,\ 1),\ (11,\ 0),\ (10,\ 2),\ (8,\ 1)).
\end{align*}
The $E_i$'s that form a decomposition of $H_{\infty}$ are
\begin{align*}
    &E_0 = ((0,\ 0),\ (6,\ 0),\ (6,\ 2),\ (0,\ 1),\ (0,\ 2),\ (6,\ 1),\ \infty) \\ 
    &E_1 = ((6,\ 0),\ (0,\ 1),\ (0,\ 0),\ (6,\ 1),\ (6,\ 2),\ (0,\ 2),\ \infty) \\ 
    &E_2 = ((0,\ 1),\ (6,\ 1),\ (6,\ 0),\ (0,\ 2),\ (0,\ 0),\ (6,\ 2),\ \infty).
\end{align*}
From this, we get that the $\overline{E_i}$'s that form a decomposition of $\overline{H_{\infty}}$ are
\begin{align*}
    &\overline{E_0} = \\ &((0,0),(3,0),(9,0),(6,0),(6,2),(9,1),(3,2),(0,1),(0,2),(3,1),(9,2),(6,1),\infty) \\
    &\overline{E_1} = \\ &((6,0),(9,1),(3,0),(0,1),(0,0),(3,1),(9,0),(6,1),(6,2),(9,2),(3,2),(0,2),\infty) \\
    &\overline{E_2} = \\ &((0,1),(3,1),(9,1),(6,1),(6,0),(9,2),(3,0),(0,2),(0,0),(3,2),(9,0),(6,2),\infty).
\end{align*}
Notice that each $F_{1j}$ is isomorphic to $\{ \cup_{k=1}^3 C_8 \}$, and each $\overline{E_i}$ is a $3(5-1) + 1 = 13$-cycle. Thus we have that the set $\{F_{1j} \cup \overline{E_j} \}_{j=0}^2$ is a decomposition of $H$ into three $2$-factors isomorphic to $\{ \cup_{k=1}^3 C_8 \} \cup C_{13}$. 
\end{example}
Now we apply Theorem \ref{cor2} to solve some explicit Oberwolfach problems. \\ \\
\textbf{Application 1:}\\
In \cite{buratti1-rotational2007} the authors found a $1$-rotational solution to $OP(3,\ ^k4)$ under $D_{4k+2}$ for each $k \in \mathbb{N}$. Thus, by Theorem \ref{cor2}, we obtain a solution to $OP(2p+1,\ ^{kp}4)$ for each prime $p$ and each $k \in \mathbb{N}$.

In the same paper, it is shown that there is a $1$-rotational solution to $OP(3,\ 6,\ ^{2k-2}4)$ under the dicyclic group $Q_{8k}$ for each $k \in \mathbb{N}$. Again we can apply Theorem \ref{cor2} to obtain a solution to $OP(2p+1,\ ^p6,\ ^{p(2k-2)}4)$ for each prime $p$ and each $k \in \mathbb{N}$.\\ \\
\textbf{Application 2:}\\
Collecting results from Propositions 3.9 and 3.10 and Theorem 3.13 from \cite{traettacomplete2013}, Theorem 6.9 from \cite{buratti2-starters2012}, and Theorem 4.2 from \cite{burattinon-existence2009}, we have that there exists a $1$-rotational solution to $OP(2r+1,\ 2s)$ except when $(2r+1,\ 2s) = (3,\ 8),\ (5,\ 4),$ $(7,\ 4),\ (5,\ 6)$ or $(3,\ 2s)$ where $s \equiv 0 \textrm{ or } 1\ \modd(4)$. Thus, applying Theorem \ref{cor2}, we obtain a solution to $OP(2pr+1,\ ^p2s)$ for all primes $p$ with the same conditions on the values for $(2r+1,\ 2s)$.\\ \\
\textbf{Application 3:}\\
In \cite{buratti2-starters2012}, the authors found a method to construct $1$-rotational solutions to many different Oberwolfach problems. In particular, it is shown that for all $\alpha \geq 1$ and $k \geq 3$ there exists a $1$-rotational solution to $OP(2^{\alpha}+1,\ ^2k,\ ^{2^{\alpha}-1}(2k))$ when $k$ is odd and $OP(2^{\alpha}+1,\ ^{2^{\alpha}+1}k)$ when $k$ is even (Proposition 3.9 in \cite{buratti2-starters2012}). Therefore we obtain solutions to $OP(p2^{\alpha}+1,\ ^{2p}k,\ ^{p2^{\alpha}-1}(2k))$ for each odd prime $p$ and $OP(p2^{\alpha}+1,\ ^{p(2^{\alpha}+1)}k)$ for all primes $p$ after applying Theorem \ref{cor2} with the same conditions on $\alpha$ and $k$.

In summary, the following Oberwolfach problems were solved in applications 1-3.
\begin{itemize}
\item $OP(2p+1,\ ^{kp}4)$ for each prime $p$ and each $k \in \mathbb{N}$
\item $OP(2p+1,\ ^p6,\ ^{p(2k-2)}4)$ for each prime $p$ and each $k \in \mathbb{N}$
\item $OP(2pr+1,\ ^p2s)$ for all primes $p$ except when $(2r+1,\ 2s) = (3,\ 8),\ (5,\ 4),$ $(7,\ 4),\ (5,\ 6)$ or $(3,\ 2s)$ where $s \equiv 0 \textrm{ or } 1\ \modd(4)$
\item $OP(p2^{\alpha}+1,\ ^{2p}k,\ ^{p2^{\alpha}-1}(2k))$ for each odd prime $p$ when $k$ is odd and $k \geq 3$ and $\alpha \geq 1$
\item $OP(p2^{\alpha}+1,\ ^{p(2^{\alpha}+1)}k)$ for all primes $p$ when $k$ is even and $k \geq 3$ $\alpha \geq 1$
\end{itemize} Of course, many other Oberwolfach problems can be found by applying Theorem \ref{cor2} to other known $1$-rotational Oberwolfach solutions; the above simply lists a few.

\section{Acknowledgements}

We would like to thank the anonymous referees from the Journal of Combinatorial Designs for their constructive feedback on this paper. Their contributions led to a simpler proof for Theorem 3.1, a more concise Section 4, and various other minor edits throughout the paper.


\end{document}